\newtheorem{theorem}{Theorem}
\DeclareMathOperator{\real}{Re}
\DeclareMathOperator{\imag}{Im}
\let\bf\textbf
\let\tt\texttt
\begin{document}

\title{\LARGE \bf Globally Optimal AC Power System Upgrade Planning under
  Operational Policy Constraints}

\author{ 
  Sandro Merkli\thanks{Automatic Control Lab,
    ETH Zurich, Physikstrasse 3, 8092 Zurich
    \tt{ \{smerkli,rsmith\}@control.ee.ethz.ch}}
  \thanks{
    embotech GmbH, Technoparkstrasse 1, 8005 Zurich,
    \tt{\{merkli,jerez,domahidi\}@embotech.com}},
  Alexander Domahidi\footnotemark[2], 
  Juan Jerez\footnotemark[2],
  Roy S.\ Smith\footnotemark[1] \\
}
\date{May 2018\\[0.5cm]\small{Copyright Notice: The final version of this work
  has been presented at the European Control Conference 2018 in Limassol, Cyprus.
  The EUCA holds copyright on that version.}}

\def \todo {\textbf{todo:} }
\def \div { \mathrm{div} }
\def \rot { \mathrm{\textbf{rot}} }
\def \minfty { -\infty }
\def \infint { \int_{-\infty}^\infty }
\def \infsum { \sum_{k = -\infty}^\infty }
\def \adj { \mathrm{adj} }
\def \lps { \quad \laplace \quad }
\def \B { \mathbb B }
\def \C { \mathbb C }
\def \N { \mathbb N }
\def \R { \mathbb R }
\def \Z { \mathbb Z }
\def \Q { \mathbb Q }
\def \Ac { \mathcal A }
\def \Bc { \mathcal B }
\def \Dc { \mathcal D }
\def \Ec { \mathcal E }
\def \Fc { \mathcal F }
\def \Gc { \mathcal G }
\def \Xc { \mathcal X }
\def \Ic { \mathcal I }
\def \Zc { \mathcal Z }
\def \Qc { \mathcal Q }
\def \Rc { \mathcal R }
\def \Uc { \mathcal U }
\def \Nc { \mathcal N }
\def \ei { \varepsilon_{\text{int}} }
\def \xtt { x_{t|t}   }
\def \gdw { \;\; \Longleftrightarrow \;\; }
\def \diag { \operatorname{diag} }
\def \minim { \operatorname*{minimize} }
\def \maxim { \operatorname*{maximize} }
\def \st { \operatorname*{subject\ to} }
\def \dom { \operatorname{dom} }
\def \img { \operatorname{im} }
\def \conv { \operatorname{conv} }
\def \cone { \operatorname{cone} }
\def \projop { \operatorname{proj} }
\def \jw  { j\omega }
\def \bmb { \begin{bmatrix} }
\def \bme { \end{bmatrix} }
\newcommand{\cve}[1] {#1}
\newcommand{\jcve}[1] {\bar{#1}}
\newcommand{\drop}[1] {{ }}

\newtheorem{lemma}{Lemma}

\maketitle
\thispagestyle{empty}
\pagestyle{empty}
\begin{abstract}

In order to accommodate the increasing amounts of renewable generation in power
  distribution systems, system operators are facing the problem of how to
  upgrade transmission capacities.

Since line and transformer upgrades are costly, optimization procedures are
used to find the minimal number of upgrades required. 

The resulting design optimization formulations are generally mixed-integer
non-convex problems. 

Traditional approaches to solving them are usually of a heuristic nature,
yielding no bounds on suboptimality or even termination. 

In contrast, this work combines heuristics, lower-bounding procedures and
practical operational policy constraints. 

The resulting algorithm finds both suboptimal solutions quickly and the global
solution deterministically by a Branch-and-Bound procedure augmented with
lazy cuts.

\end{abstract}
\section{Introduction}
Increased penetration of renewables in distribution systems is leading to
voltage violations at peak generation times~\cite{Walling2008}. Both changes to
the operational schemes of the systems~\cite{Merkli2017} as well as changes to
the system topologies themselves~\cite{Rider2007} are being investigated as
remedies to these problems. This work treats the particular problem of
selecting which lines of a power grid to upgrade in order to stop voltage
violations from recurring in the future. Since significant investments are
required for infrastructure upgrades, optimization for minimum cost is desired.

Power system planning optimizations based on the AC model of the system
generally lead to non-convex optimization problems due to the non-linearity of
the equations involved. Additionally, integrality is introduced into the
planning problems from two sources: Firstly, the number of lines to be upgraded
is usually the dominant factor in the cost. This leads to a cardinality-type
cost that can be formulated using binary variables. Secondly, there are often
only a given number of different line admittance values available for purchase.
The presence of this integrality can be modeled with integer decision
variables. Due to the integrality and non-convexity of the problem, most
existing methods for solving it are heuristic in nature. One line of research
applies stochastic algorithms to the problem~\cite{Ramirez-Rosado1998}. Another
approach is to approximate the system model by the DC power flow
equations~\cite{Macedo2016}, turning the mixed-integer nonlinear program
(MINLP) into a mixed-integer linear program (MILP). Mature codes for solving
the latter are commercially available. However, the solution of the MILP
approximation is not necessarily feasible for the original MINLP. A third
approach is to form a convex outer approximation of the non-convex AC model
based on conic constraints, leading to a mixed-integer semidefinite problem
(MISDP)~\cite{Jabr2013}. Compared to the DC power flow approximation, this
approach has the advantage of yielding a guaranteed lower bound on the
optimal cost of the planning problem.

The method presented in this paper extends the third approach based on MISDP in
two aspects: Firstly, additional constraints are added to the problem that
specify how the system will be operated given fixed upgrade decisions and load
patterns. These constraints will hereafter be referred to as ``policy
constraints''. The addition of policy constraints guarantees that the solution
of the planning problem is feasible when deployed in practice. Additionally it
circumvents the difficulty of checking feasibility of a power flow for a given
load pattern, which was recently shown to be NP-hard~\cite{Bienstock2015,Lehmann2016}.
Secondly, the Branch-and-Bound algorithm is adapted to incorporate the policy
constraints efficiently. The result is a method that deterministically finds
the globally optimal solution for an optimization problem that better reflects
reality than the commonly used MINLP without policy constraints. Due to the
nature of the Branch-and-Bound procedure, any existing heuristics for finding
useful feasible points as well as existing lower-bounding methods can easily be
integrated into the approach. 
Despite the problem still being NP-hard to solve, the numerical experiments
presented towards the end of the paper demonstrate that the method can be
applied successfully to practical systems.

\subsection{Outline}
Section~\ref{sec:prelim} introduces notation, the model, policy constraints and
violating snapshots. It ends with a formulation of the upgrade problem solved
in this work. Section~\ref{sec:bnbalgo} introduces the Branch-and-Bound
algorithm used and the implementation of policy constraints as lazy cuts.
Section~\ref{sec:numsim} presents numerical simulations with different
operating policies and discusses properties of the results. 

\section{Modeling}
\label{sec:prelim}
\subsection{Notation and Kirchhoff equations}
The complex conjugate of a complex variable $x$ is denoted by $\bar x$. We will
use tuple notation $(a,b)$ where appropriate to refer to a collection of
vectors $a$ and $b$ to avoid introducing overly many stacked vectors.
The power grid is modeled using an undirected graph with $N$ vertices
(representing buses) and $L$ edges (representing lines). Each line (say,
between buses $j$ and $l$) has an admittance $y_{jl} \in \C$. Associated with
each bus $j$ is a voltage $v_j \in \C$ and a power in-feed $s_j \in \C$. The
power flows in the grid are governed by the AC Kirchhoff equations:
\begin{equation}
  \label{eqn:kirchhoff}
  \diag(v) \bar Y \bar v = s,
\end{equation}
where $Y \in \C^{N \times N}$ is the system admittance matrix:
\begin{equation}
  Y_{jl} := \begin{cases} y_{jl} & \text{if } j \ne l, \\
    y_{j}^{\mathrm{sh}} - \sum_{k=1,k\ne j}^N y_{jk} & \text{if } j = l,
  \end{cases}
\end{equation}
with $y_j^{\mathrm{sh}} \in \C$ being shunt admittances.

\subsection{Operation-independent constraints}
Certain constraints are needed no matter how the system is operated.
Firstly, the voltage magnitude at each bus $j$ is constrained to an admissible
range:
\begin{equation}
  \label{eqn:vdonut}
  v_{\min,j} \le |v_j| \le v_{\max,j},
\end{equation}
and the currents over the lines (for example, line $(l,j)$) are constrained to
be within thermal limits:
\begin{equation}
  \label{eqn:linecurr}
  |Y_{jl}||v_j - v_l | \le I_{\max,jl}.
\end{equation}
Assume the power at each bus $j$ has upper and lower bounds: 
\begin{equation}
  \label{eqn:powbox}
  s_{\min,j} \le s_j \le s_{\max,j},
\end{equation}
which can be interpreted as generator capability bounds for generator buses.
At load buses, the bounds can either represent the capability for demand
response or they can be fixed to the specific load drawn at that bus in 
a given scenario. For ease of notation, define the set of voltages and 
powers satisfying these constraints as 
\begin{equation}
  \mathcal X := \left\{ (s,v)\; \big|\; \eqref{eqn:vdonut}, \eqref{eqn:linecurr}, 
    \eqref{eqn:powbox} \right\}.
\end{equation}
In the case of multiple distinct system states being considered, we will make
use of the notation $\mathcal X^k$ for the constraints applying for system
state $k$. Note that the set $\mathcal X$ is parametrized by $Y$ and
$I_{\max,jl}$.

Additional constraints can be included in the approach provided they admit
semidefinite relaxations and can be included in the policy. For example, one
way of including phase angle constraints is in the form of 
\begin{equation}
  \label{eqn:langlecon}
  \left| v_j - v_l \right|^2 \le \sin(\alpha)^2 \left| v_j \right|,
\end{equation}
for a line from bus $j$ to bus $l$ and an angle limit of $\alpha$.
Constraint~\eqref{eqn:langlecon} can be added to the problem in quadratic form
and handled similarly to the voltage magnitude constraints.

\subsection{Operational policy}
Assume now that the buses of the network are partitioned into loads and
generators. Accordingly, let $s_\mathrm{gen}$ be the vector of powers of the
generators and $s_\mathrm{load}$ be that of the loads. 
Given a system topology and load pattern, we assume the operator of
the generators picks set-points (and with them, the network voltages) based on
a given operating policy:
\begin{equation} 
  (s_\mathrm{gen},v) = g(Y,s_\mathrm{load}).
\end{equation}
Note that we make the distinction between variables that are under operator
control (specifically $s_\mathrm{gen}$ and by extension also~$v$ here) and
variables the operator can only measure or estimate (specifically
$s_\mathrm{load}$ here). The method presented in this work does not require this
specific distinction, only that a distinction is made. One could for example 
also include curtailable loads in the set of variables under operator control.

The operating policy function $g$ can be any surjective function, the only
requirement is that it can be evaluated with reasonable efficiency. Examples
for operational policies include: 
\begin{itemize}
  \item[-] Local frequency-based controllers at the generators;
  \item[-] Networked frequency-based controllers;
  \item[-] Power flow computations with fixed generator voltages;
  \item[-] Numerical economic dispatch computations.
\end{itemize}
Note that in all these operational policies, the resulting system state $(s,v)$
has to satisfy the Kirchhoff equations~\eqref{eqn:kirchhoff} in order to be
physically meaningful. However, whether $(s,v)$ is in $\mathcal X$ or not
can depend on the policy. For example, it could happen that while local
controllers might not find a workable solution for a situation in which the
system is strongly loaded, numerical optimization methods can. 

\subsection{Line upgrades}
\label{ssec:lineupg}

Line upgrades are modeled as changes to the Laplacian matrix~$Y$ of the grid:
For each upgrade, a binary variable $a_i$ encodes whether the upgrade is
performed ($1$) or not ($0$). A constant matrix $\delta Y_i \in \C^{N \times
N}$ is used to encode the change this upgrade brings to the system. Overall,
the upgraded~$Y$ can then be written as follows:
\begin{equation}
  Y_\mathrm{upg} = Y + \sum_{i=1}^{n_u} (a_i \cdot \delta Y_i),
\end{equation}
where $n_u$ is the number of upgrade possibilities.
This formulation allows for multiple line upgrades to be linked into one
upgrade possibility as well as for shunt admittance changes. The upgrade
variables are now collected in a vector $a \in \{0,1\}^{n_u}$ and are assumed
to be the only influence on system topology in the problem.

The change in the line current limits introduced by the upgrades is similarly
defined as 
\begin{equation}
  I_{\max,jl,\mathrm{upg}} = I_{\max,jl} + \sum_{i \in \mathcal U_{jl}} 
    (a_i\cdot \delta I_{jl,i}),
\end{equation}
where $\mathcal U_{jl}$ is the set of upgrade indices that affect the line from
bus $j$ to bus $l$ and $\delta I_{jl,i}$ is the change in current limit
introduced to that line by the upgrade $i$.

\subsection{Violating snapshots}
We assume that a set of problematic steady-state snapshots~$(s^k,v^k)$, $k \in
\{1,\ldots,K\}$ are given. These snapshots are assumed to satisfy the Kirchhoff
equations~\eqref{eqn:kirchhoff}, but violate some of the voltage and current
constraints in~\eqref{eqn:vdonut} and~\eqref{eqn:linecurr}. 
Snapshot data can come from a variety of sources: Examples include past
measurement data, predictions of future scenarios or known worst case loadings.
It is useful to look at past violating snapshots due to the repetitiveness of
load patterns: If a violating load pattern is observed, it is likely that a
similar pattern will show up again in the future.
The assumption is made here that the uncertainty and variability of renewables
is represented adequately by the selection and number of snapshots. A more
precise representation of uncertainty is difficult to formulate at this time
due to the non-convexity of the upgrade problem and the presence of policy
constraints.

\subsection{System upgrade problem}
The problem we address in this work is to find upgrades to the power system
that would eliminate all constraint violations in the considered snapshots by
means of grid upgrades. The problem we would like to solve can be written as
follows:
\begin{subequations}
  \makeatletter
  \def\@currentlabel{U}
  \makeatother
  \label{eqn:upgradeproblem}
  \renewcommand{\theequation}{U.\arabic{equation}}
  \begin{align}
    \text{Problem U:} &\;\; \minim_{a,\tilde v^k,\tilde s^k} \;\; f(a) \\
    \st &\;\; Aa \le b \\
        &\;\; a \in \{0,1\}^{n_u} \label{eqn:up_upg}\\ 
        &\;\; Y_{\mathrm{upg}} = Y + \sum_{i=1}^{n_u} 
          (a_i \cdot \delta Y_i) \label{eqn:up_upg2} \\
        &\;\; I_{\max,jl,\mathrm{upg}} = I_{\max,jl} + 
          \sum_{i \in \mathcal U_{jl}} (a_i\cdot \delta I_{jl,i}), \\
        &\;\; \diag(\tilde v^k)\overline{Y_{\mathrm{upg}}
          \tilde v^k} = \tilde s^k \label{eqn:up_kir}\\
        &\;\; (\tilde s^k,\tilde v^k) \in \mathcal{X}^k \label{eqn:up_opcon}\\
        &\;\; (s_\mathrm{gen}^k,v^k) = g(Y_{\mathrm{upg}},s_\mathrm{load}^k) \\
        &\;\; \forall k \in \{1,\ldots, K\} \nonumber,
  \end{align}
\end{subequations}
where $f(a)$ represents the cost of the upgrades and is assumed here to be
convex. Convexity of $f(a)$ is required for the branch and bound procedure to
work. The polyhedron $Aa \le b$ represents constraints on upgrade combinations
-- for example that only one line type can be chosen per location. Note also
that given a value for $a$, the value of $Y_\mathrm{upg}$ can be computed
using~\eqref{eqn:up_upg2}. Therefore, the policy can also be understood as
being a function of $a$ directly and we will use the more compact notation
$g(a,s_\mathrm{load})$ from now on. The variable vectors $\tilde s^k$ and
$\tilde v^k$ are defined as follows:
\begin{equation}
  \tilde s^k_i = \begin{cases} 
    s^k_i & \text{if bus $i$ is a load} \\
    \text{a decision variable} & \text{otherwise.} \\
  \end{cases}
\end{equation}
The voltage vector $\tilde v^k$ is entirely an optimization variable: We expect all
voltages in the network to change if the topology and dispatch change. For
PV buses, the magnitude can be fixed by setting the lower and upper limits
appropriately. While it may seem like the presence of the
constraint~\eqref{eqn:up_opcon} renders the Kirchhoff
constraint~\eqref{eqn:up_kir} irrelevant, the latter will become important
later on for relaxation purposes.

\section{Branch-and-Bound procedure including policy constraints}
\label{sec:bnbalgo}
\subsection{QCQP formulation and semidefinite relaxation}
For the sake of easier notation, we assume there is only one snapshot (i.e.,
$K = 1$) and omit the $k$ index in this section. The resulting reformulation
then applies to each snapshot $k$ separately.
As shown in~\cite{Jabr2013,Low2014b}, among others, it is possible to apply a
set of reformulations to the intersection of the
constraints~\eqref{eqn:up_upg2} through~\eqref{eqn:up_opcon}. The result of
these reformulations is a set of quadratic constraints
\begin{equation}
  \alpha_i \le z^TQ_iz + q_i^Ty + m_i^Ta \le \beta_i
\end{equation}
for $i \in \{1,\ldots,I\}$ and where $z$ contains the voltage variables ($z =
\begin{bmatrix} \real{v}^T & \imag{v}^T \end{bmatrix}^T$), $a$ contains the 
upgrade variables as before and $y$ contains all the remaining variables.
The latter include the generator powers as well as additional variables that
were introduced in the reformulation. A detailed outline of the reformulation
used in this work is omitted here for brevity but can be found
in~\cite{Merkli2017a}. This means problem~\eqref{eqn:upgradeproblem} can be
equivalently rewritten as 
\begin{subequations}
  \makeatletter
  \def\@currentlabel{P}
  \makeatother
  \label{eqn:upgradeproblem2}
  \renewcommand{\theequation}{P.\arabic{equation}}
  \begin{align}
    \text{Problem P:} & \;\; \minim_{a,z,y} f(a) \\
         \st &\;\; Aa \le b \label{eqn:up2_poly}\\
             &\;\; a \in \{0,1\}^{n_u}\label{eqn:up2_bin} \\ 
             &\;\; \alpha_i \le z^TQ_iz + q_i^Ty + m_i^Ta \le \beta_i\label{eqn:up2_quad} \\
             &\;\; (s_\mathrm{gen},v) = g(a,s_\mathrm{load})\label{eqn:oppol} \\
             &\;\; \forall i \in \{1,\ldots, I\}.  \nonumber
  \end{align}
\end{subequations}
A mixed-integer convex relaxation of~\eqref{eqn:upgradeproblem2} can be found
by applying the commonly used semidefinite relaxation procedure for quadratic
constraints, which is outlined in~\cite{Low2014a}, as well as omitting the
policy constraint. This leads to a mixed-integer semidefinite problem:
\begin{subequations}
  \makeatletter
  \def\@currentlabel{R}
  \makeatother
  \label{eqn:upgradeproblem_cvx}
  \renewcommand{\theequation}{R.\arabic{equation}}
  \begin{align}
    \text{Problem R:} & \;\; \minim_{a,Z,y} f(a) \\
         \st &\;\; Aa \le b \\
             &\;\; a \in \{0,1\}^{n_u} \label{eqn:up3_bin}\\ 
             &\;\; \alpha_i \le tr(Q_i^TZ) + q_i^Ty + m_i^Ta \le \beta_i \label{eqn:up3_lmi}\\
             &\;\; Z \succeq 0 \label{eqn:up3_psd} \\
             &\;\; \forall i \in \{1,\ldots, I\} \nonumber,
  \end{align}
\end{subequations}
where $Z$ represents $zz^T$, but relaxed to $Z \succeq 0$ (details on this
relaxation are in~\cite{Low2014a}, among others).
Problem~\eqref{eqn:upgradeproblem_cvx} can now be solved to global optimality
using the Branch-and-Bound method, as demonstrated in~\cite{Jabr2013}.

\subsection{Policy cuts}
While computable, the solution of~\eqref{eqn:upgradeproblem_cvx} (denoted by
$(a^r,Z^r,y^r)$) is not guaranteed to be feasible
for~\eqref{eqn:upgradeproblem2} due to the former being a relaxation of the
latter.
Additionally, even if a $z^r$ can be found such that $Z^r = (z^r)(z^r)^T$
and $(a^r,z^r,y^r)$ satisfies constraints~\eqref{eqn:up2_poly}
through~\eqref{eqn:up2_quad}, there is no guarantee that the (likely different)
operating point that results when the operating policy
$g(a^r,s^r_{\mathrm{load}})$ is applied still satisfies them.  Assume that we are
given a solution $(a^r,z^r,y^r)$ of~\eqref{eqn:upgradeproblem_cvx} which is not
feasible for~\eqref{eqn:upgradeproblem2}. 
A constraint (or ``cut'') can then be added to~\eqref{eqn:upgradeproblem_cvx}
that excludes this particular upgrade choice $a^r$: 
\begin{equation}
  \label{eqn:lazycut}
  \|a - a^r\|_1 \ge 1, 
\end{equation}
The cut~\eqref{eqn:lazycut} only removes the particular case $a = a^r$ from the
feasible space of~\eqref{eqn:upgradeproblem_cvx} due to the entries of $a$
being in $\{0,1\}$. Such constraints will hereafter be referred to as ``policy
cuts''. They can be implemented as linear constraints on $a$. For notational
convenience, define the projection of the feasible set
of~\eqref{eqn:upgradeproblem2} onto the upgrade space as follows:
\begin{equation}
  \mathcal F_{\eqref{eqn:upgradeproblem2}} := \left\{ a \in \{0,1\}^{n_u}
    \; \middle| \;
    \begin{matrix} Aa \le b, \\
      \exists (z,y,v,s_\mathrm{gen}) : \eqref{eqn:up2_quad}, \eqref{eqn:oppol} \\
      \forall i \in \{1,\ldots,I\}
    \end{matrix} 
  \right\}.
\end{equation}
In other words, $\mathcal F_{\eqref{eqn:upgradeproblem2}}$ is the set of all
upgrade configurations $a$ for which~\eqref{eqn:upgradeproblem2} is feasible.
Similarly, for~\eqref{eqn:upgradeproblem_cvx}, we define
\begin{equation}
  \mathcal F_{\eqref{eqn:upgradeproblem_cvx}} := \left\{ a \in \{0,1\}^{n_u}
    \; \middle| \;
    \begin{matrix} Aa \le b, \\
      \exists (Z,y) : \eqref{eqn:up3_lmi}, \eqref{eqn:up3_psd} \\
      \forall i \in \{1,\ldots,I\}
    \end{matrix} 
  \right\}.
\end{equation}
We now state a theorem that relates policy cuts and
problems~\eqref{eqn:upgradeproblem2} and~\eqref{eqn:upgradeproblem_cvx}:
\begin{theorem}
  \label{thm:globopt}
  Let \emph{(R$^+$)} be problem~\eqref{eqn:upgradeproblem_cvx} with additional
  constraints~\eqref{eqn:lazycut} added for each $a^r \in \mathcal A$, where
  \begin{equation*}
    \mathcal A := \left\{ a \in \{0,1\}^{n_u} \;\middle|\;
      a \in \mathcal F_{\eqref{eqn:upgradeproblem_cvx}} \setminus
        \mathcal F_{\eqref{eqn:upgradeproblem2}} \right\}.
  \end{equation*}
  Let $\mathcal P$ be the intersection of these constraints, which is a 
  finite-dimensional convex polyhedron:
  \[ \mathcal P := \left\{ a \in \R^{n_u} \; \middle| \; \|a - a^r\|_1 \ge 1
    \;\forall a^r \in \mathcal A \right\}. \]
  This allows the projection of the feasible region of (R$^+$) onto the upgrade
  space to be written as
  \[ \mathcal F_{\text{\emph{(R$^+$)}}} := \mathcal 
    F_{\eqref{eqn:upgradeproblem_cvx}} \cap \mathcal P. \]
  Problem \emph{(R$^+$)} is now equivalent to~\eqref{eqn:upgradeproblem2} in the
  following sense: $a \in \mathcal F_{\eqref{eqn:upgradeproblem2}}$ if and only
  if $a \in \mathcal F_{\text{\emph{(R$^+$)}}}$. Additionally, \emph{(R$^+$)}
  and~\eqref{eqn:upgradeproblem2} have the same sets of globally optimal
  choices of $a$.

\end{theorem}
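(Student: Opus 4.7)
The plan is to show the set equality $\mathcal F_{(R^+)} = \mathcal F_{\eqref{eqn:upgradeproblem2}}$ by set inclusion in both directions, and then derive the equivalence of global optima as an immediate corollary since the objective $f(a)$ depends only on $a$ and is identical in both problems.

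First I would establish the containment $\mathcal F_{\eqref{eqn:upgradeproblem2}} \subseteq \mathcal F_{\eqref{eqn:upgradeproblem_cvx}}$. This follows because problem~\eqref{eqn:upgradeproblem_cvx} is obtained from~\eqref{eqn:upgradeproblem2} by two operations that only enlarge the feasible set: dropping the policy constraint~\eqref{eqn:oppol}, and replacing each quadratic term $z^T Q_i z$ by $\mathrm{tr}(Q_i^T Z)$ with the single relaxation $Z \succeq 0$ instead of $Z = z z^T$. Any $a$ admitting a witness $(z,y,v,s_{\mathrm{gen}})$ for~\eqref{eqn:upgradeproblem2} therefore admits the witness $(Z,y) = (zz^T, y)$ for~\eqref{eqn:upgradeproblem_cvx}, showing the inclusion.

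Next I would exploit the binary structure to translate the $\ell_1$-cuts into pointwise exclusions. For any $a, a^r \in \{0,1\}^{n_u}$ one has $\|a - a^r\|_1 = |\{i : a_i \neq a_i^r\}|$, which is at least $1$ if and only if $a \neq a^r$. Hence restricted to $\{0,1\}^{n_u}$, the polyhedron $\mathcal P$ coincides with $\{0,1\}^{n_u} \setminus \mathcal A$. Combined with the definition $\mathcal F_{(R^+)} = \mathcal F_{\eqref{eqn:upgradeproblem_cvx}} \cap \mathcal P$ and the identity $\mathcal A = \mathcal F_{\eqref{eqn:upgradeproblem_cvx}} \setminus \mathcal F_{\eqref{eqn:upgradeproblem2}}$, a short set-algebra calculation using the inclusion from the previous step gives
\[
\mathcal F_{(R^+)} = \mathcal F_{\eqref{eqn:upgradeproblem_cvx}} \setminus \bigl(\mathcal F_{\eqref{eqn:upgradeproblem_cvx}} \setminus \mathcal F_{\eqref{eqn:upgradeproblem2}}\bigr) = \mathcal F_{\eqref{eqn:upgradeproblem_cvx}} \cap \mathcal F_{\eqref{eqn:upgradeproblem2}} = \mathcal F_{\eqref{eqn:upgradeproblem2}},
\]
which is the claimed equivalence of feasible projections onto the upgrade space.

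Finally, since the objective $f(a)$ and the binary/polyhedral constraints $Aa \le b$, $a \in \{0,1\}^{n_u}$ are identical in (R$^+$) and~\eqref{eqn:upgradeproblem2}, and the feasible projections coincide, any $a^\star$ minimizing $f$ over $\mathcal F_{\eqref{eqn:upgradeproblem2}}$ also minimizes it over $\mathcal F_{(R^+)}$ and vice versa; feasibility of the accompanying auxiliary variables is guaranteed in each problem by the respective feasibility characterisations. The only real subtlety I anticipate is making precise that the SDP relaxation step genuinely enlarges the feasible set (rather than distorting it), which is standard but worth stating explicitly; the rest is a bookkeeping exercise on sets.
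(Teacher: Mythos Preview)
Your proposal is correct and follows essentially the same approach as the paper: establish $\mathcal F_{\eqref{eqn:upgradeproblem2}} \subseteq \mathcal F_{\eqref{eqn:upgradeproblem_cvx}}$ via the lifting $Z = zz^T$, then use the set identity $\mathcal F_{\eqref{eqn:upgradeproblem_cvx}} \setminus (\mathcal F_{\eqref{eqn:upgradeproblem_cvx}} \setminus \mathcal F_{\eqref{eqn:upgradeproblem2}}) = \mathcal F_{\eqref{eqn:upgradeproblem2}}$, and conclude equality of optima from the shared objective depending only on $a$. Your version is in fact slightly more careful, since you make explicit why the $\ell_1$-cuts remove exactly the points of $\mathcal A$ on the binary lattice, a step the paper leaves implicit.
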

\begin{proof}
  First note that $\mathcal F_{\eqref{eqn:upgradeproblem2}} \subseteq 
   \mathcal F_{\eqref{eqn:upgradeproblem_cvx}}$: An optimizer
  $(a^*,z^*,y^*)$ of~\eqref{eqn:upgradeproblem2} can be used to compute a $Z^*
  = (z^*)(z^*)^T$. This $Z^*$ is positive semidefinite and hence
  $(a^*,Z^*,y^*)$ is feasible for~\eqref{eqn:upgradeproblem_cvx}. 
  However, any $a$ that are in $\mathcal F_{\eqref{eqn:upgradeproblem_cvx}} \setminus
  \mathcal F_{\eqref{eqn:upgradeproblem2}}$ are excluded from (R$^+$)
  by the additional constraints in $\mathcal P$, in other words:
  \[ \mathcal F_{\text{(R$^+$)}} = \mathcal F_{\eqref{eqn:upgradeproblem_cvx}} \setminus
    (\mathcal F_{\eqref{eqn:upgradeproblem_cvx}} \setminus 
    \mathcal F_{\eqref{eqn:upgradeproblem2}}) = 
      \mathcal F_{\eqref{eqn:upgradeproblem2}}.
  \]
  Because the two problems have the same feasible sets of $a$ and the cost
  functions are the same and only depend on $a$, the two problems are
  equivalent. 
\end{proof}

\subsection{Algorithm}
\begin{figure}
  \vspace{0.2cm}
  \begin{algorithmic}[1]
    \small
    \State Set $U = \infty, L = -\infty$, 
      Tree: Root vertex $\mathcal I_0 = \mathcal I_1 = \emptyset$ 
    \label{algstep:termination}
    \While{$U - L > \varepsilon$}
      \State Pick an unprocessed vertex $\mathcal N$ with index
      sets $\mathcal I_0^{\mathcal N}$, $\mathcal I_1^{\mathcal N}$
      \State Solve~\eqref{eqn:upgradeproblem_cvx} with~\eqref{eqn:up3_bin} replaced by 
      \[ a_i \in \begin{cases} \{0\}, & \text{if } i \in \mathcal I_0^{\mathcal N}, \\
        \{1\}, & \text{if } i \in \mathcal I_1^{\mathcal N}, \\
          [0,1] & \text{otherwise.} \end{cases} 
      \] \label{alg:solveref}
      
      \If{Problem was feasible}
        \State Let $(a^{\mathcal N}, s^{\mathcal N}, v^{\mathcal N})$ refer to the solution
        \If{$f(a^{\mathcal N}) < U$ and $a^{\mathcal N} \in \{0,1\}^{n_u}$} 
          \vspace{0.1cm}
          \State Evaluate policy $g(a^{\mathcal N}, s^{\mathcal N}_{\mathrm{load}})$ 
            \label{alg:policycheck}
          \vspace{0.1cm}
          \If{Feasible}
            \vspace{0.1cm}
            \State Update $U = f(a^{\mathcal N})$ \label{alg:updateu}
            \vspace{0.1cm}
          \Else
            \vspace{0.1cm}
            \State Add cut $\|a - a^{\mathcal N}\|_1 \ge 1$
            \State Go back to the solve step (line \ref{alg:solveref})
            \vspace{0.1cm}
          \EndIf \label{alg:policyend}
          \vspace{0.1cm}
        \ElsIf{$f(a^{\mathcal N}) < U$ but $a^{\mathcal N} \not\in \{0,1\}^{n_u}$}
          \vspace{0.1cm}
          \State Select index $k$, $k \not\in \mathcal I_0^{\mathcal N} 
            \cup \mathcal I_1^{\mathcal N}$
          \vspace{0.1cm}
          \State Add a vertex with $\mathcal I_0 = \mathcal I_0^{\mathcal N} 
            \cup \{k\}, \mathcal I_1 =\mathcal  I_1^{\mathcal N}$
          \vspace{0.1cm}
          \State Add a vertex with $\mathcal I_0 = \mathcal I_0^{\mathcal N} ,
            \mathcal I_1 = \mathcal I_1^{\mathcal N}\cup \{k\}$
          \vspace{0.1cm}
        \EndIf 
      \Else\vspace{0.1cm}
        \State Set $f(a^{\mathcal N}) = \infty$
        \vspace{0.1cm}
      \EndIf \vspace{0.1cm}
      \State Update $L = \min \left\{ L^{\mathcal N}\; |\; \mathcal N \in \text{tree} \right\}$ 
      \vspace{0.1cm}
    \EndWhile
  \end{algorithmic}
  \caption{Branch-and-Bound algorithm with policy cuts. The difference between
    regular Branch-and-Bound and the algorithm here is the policy evaluation
    and cut addition. In regular Branch-and-Bound, lines~\ref{alg:policycheck}
    through~\ref{alg:policyend} would be replaced by only~\ref{alg:updateu}. }
  \label{alg:bnbcuts}
\end{figure}
Theorem~\ref{thm:globopt} does not yield any algorithmic insight into how the
original problem can be solved: The set $\mathcal A$ is not known, and the only
way to compute it is to evaluate the policy function for each $a \in
\{0,1\}^{n_u}$ that satisfies $Aa \le b$. 

However, the field of mixed-integer convex programming provides the idea of
\emph{lazy constraints}: If a large set of constraints are required in a
problem but only a few of them are expected to be violated, they are not added to the
formulation at the beginning. Rather, the problem is first solved without them.
As soon as a solution is found, a check is performed whether any of the lazy
constraints are violated. If no constraints are violated, the optimal solution
to the true problem was found. If constraint violations are found, the solution
is discarded and the problem resolved with the violated constraints added to
the formulation. A similar approach is taken in other fields where a set of
constraints is too large to enumerate or otherwise
impractical~\cite{Donovan2006}.
In practice, such mixed-integer convex problems are not solved to optimality
before the lazy constraints are checked. Rather, any potential incumbent
solution discovered while traversing the Branch-and-Bound tree is checked for
the lazy constraints and only accepted if it does not violate any of them.
This is also the approach taken in this work. 

The complete algorithm is outlined in Figure~\ref{alg:bnbcuts}. We denote
$\mathcal I_0^\mathcal N$ and $\mathcal I_1^\mathcal N$ to be the index sets of
upgrade variables that are fixed to $0$ and $1$ at tree vertex $\mathcal N$,
respectively. Any feasible solution of~\eqref{eqn:upgradeproblem_cvx} that is
found is checked for policy feasibility and only accepted if it is also
feasible for~\eqref{eqn:upgradeproblem2}.

\section{Numerical simulation study}
\label{sec:numsim}
The algorithm from Figure~\ref{alg:bnbcuts} was implemented in the Julia
language~\cite{Bezanson2017} using the JuMP optimization modeling
package~\cite{DunningHuchetteLubin2017}. In order to accelerate it, a simple
greedy heuristic was run to obtain feasible integer solutions more quickly. The
smooth nonlinear problems encountered in the heuristic were solved using
IPOPT~\cite{Waechter2006} and the semidefinite relaxations were solved with
MOSEK~\cite{ApS2015}.  The clique decomposition
method~\cite{jabr2012exploiting} was applied to improve scalability of the
method to larger power systems. The computer used to run the experiments had an
Intel Core i7-4600U CPU clocked at 2.10 GHz and 8 GB of memory. The operating
system used was 64-bit Debian Linux 9.0, kernel version 4.12.0.

\subsection{Operating policies}
\label{ssec:operatingpolicies}
In the following experiments, the planning problem was solved for different
operating policies $g(a,s)$:
\begin{enumerate}[(i)]
  \item No policy constraints: Solve~\eqref{eqn:upgradeproblem_cvx} to global
    optimality, neglecting any policy constraints and relaxation gaps.
    This can be seen as the reference approach.
  \item AC optimal power flow policy: An AC OPF problem is solved, starting at
    the voltage vector with all entries equal to $1.0$ per unit, using IPOPT as
    solver. Slack variables are added to the voltage magnitude and line current
    constraints in order to have the policy return a result even if it could
    only find one that violates the constraints. The objective is set to
    minimize active and reactive power usage at the generators.

  \item Newton AC power flow: A newton AC power flow is started with the  
    snapshot voltages. 
\end{enumerate}
All of these policies can be evaluated for a given upgrade choice $a$ and a
given load profile $s_\mathrm{load}$. The resulting system state is then checked
for feasibility with respect to the operating constraints. 

\begin{figure}[t]
  \vspace{0.5cm}
  \centering
  \includegraphics[width=0.3\textwidth]{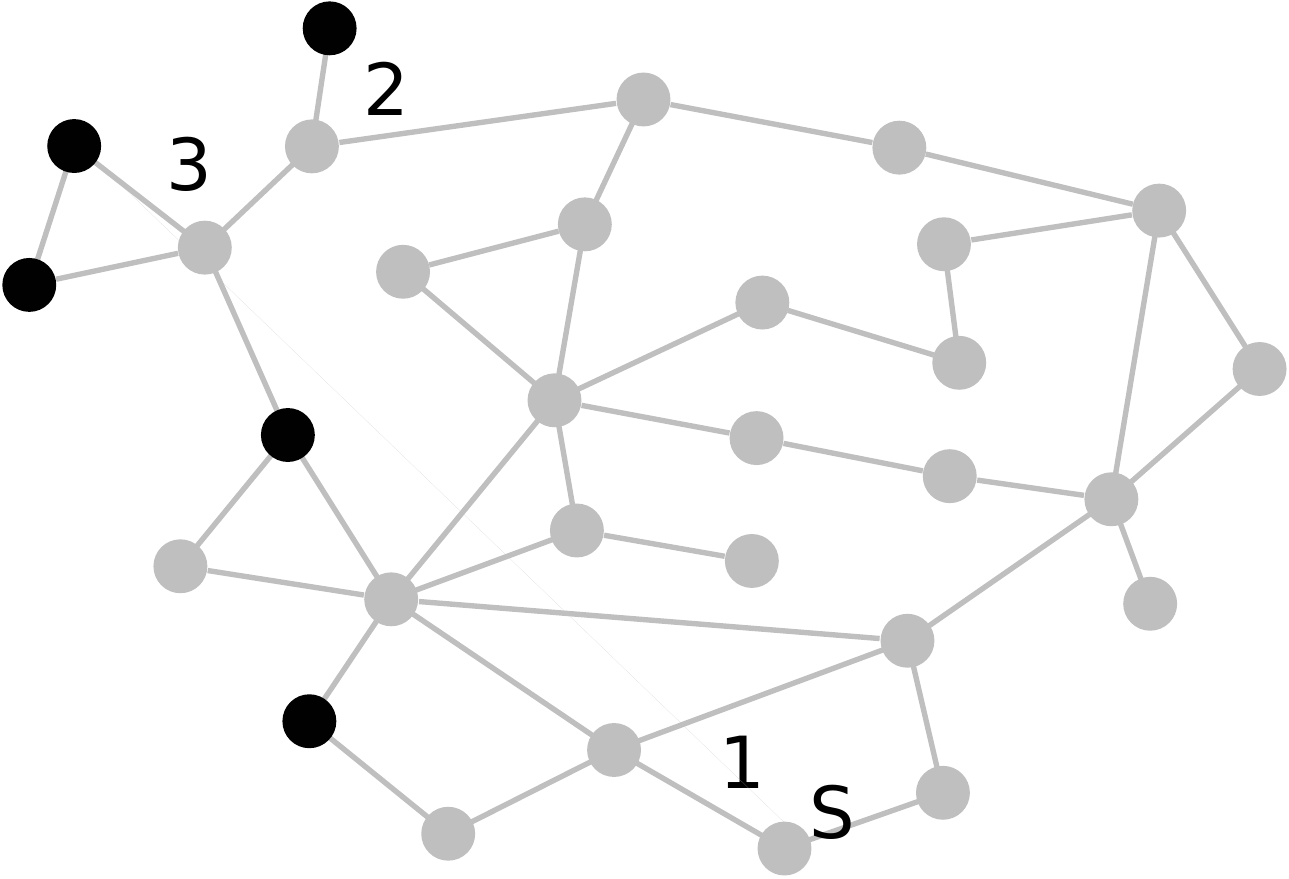}
  \caption{Violations and upgrades for the IEEE 30-bus test case with tightened
    voltage limits. Darker vertices represent voltage magnitude violations. The
    vertex marked "S" is the slack bus. The numbers are used as references in
    the upgrade summary table.}
  \label{fig:30bus}
\end{figure}

\begin{figure}[t]
  \vspace{0.25cm}
  \centering
  \includegraphics[width=0.46\textwidth]{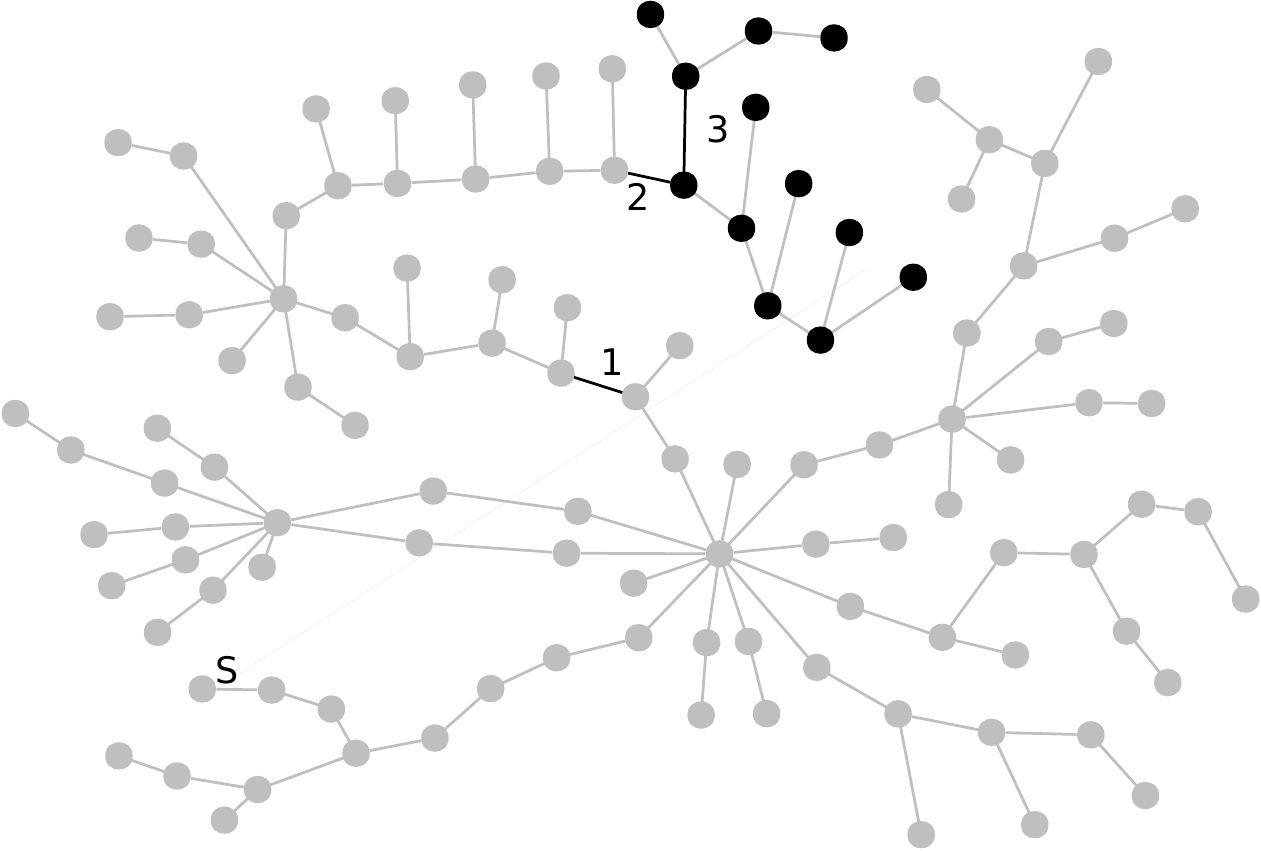}
  \caption{Violations and upgrades for a part of the Z\"urich distribution grid
  with realistic upgrade data. Darker lines represent current violations. }
  \label{fig:116bus}
\end{figure}

\begin{table}[t]
  \centering
  \vspace{0.1cm}
  \renewcommand{\arraystretch}{1.3}
  \caption{Upgrade results for 30-bus case}
  \begin{tabular}{l c c c}
    \hline
    & \textbf{No policy} & \textbf{OPF} & \textbf{Newton PF}\\
    Upgrades            & None & None & 1,2,3 \\
    Cost                & 0 & 0 & 3 \\
    Slack active power  & No result   & -35.8\,MW & 150\,MW \\
    Slack reactive power  & No result & 26.7\,MVar & 1.2\,MVar \\
    Average $|v|$ slack & No result & 0.020 p.u. & 0.014 p.u. \\
    \hline
  \end{tabular}
  \label{tbl:comparison_30}
\end{table}

\begin{table}[t]
  \centering
  \vspace{0.3cm}
  \renewcommand{\arraystretch}{1.3}
  \caption{Upgrade results for 116-bus case}
  \begin{tabular}{l c c c}
    \hline
    & \textbf{No policy} & \textbf{OPF} & \textbf{Newton PF}\\
    Upgrades            & None & 3 & 1,2,3 \\
    Cost                & 0 & 1 & 3 \\
    Slack active power  & No result   & 3.19\,MW    & 3.15\,MW    \\
    Slack reactive power  & No result & 0.63\,MVAr & 0.64\,MVAr \\
    Average $|v|$ slack & No result & 0.054 p.u. & 0.050 p.u.\\
    \hline
  \end{tabular}
  \label{tbl:comparison_116}
\end{table}

\subsection{Case study: IEEE 30-bus network}
As a first demonstration, consider the IEEE 30-bus system from
MATPOWER~\cite{Zimmerman2011}. Being far from radial, the guarantees
for convexification are not guaranteed to hold. We use the existing load data
from MATPOWER along with tightened voltage limits $[1.01,1.07]$ as a violating
snapshot. For upgrades, each line was allowed to be upgraded to $1.5$ or $3$
times its original admittance, at constant ratio of conductance to susceptance. 

Figure~\ref{fig:30bus} shows the network and locations of violations and
Table~\ref{tbl:comparison_30} shows the upgrades obtained for different policies.
When the optimization was run, both the relaxed
problem~\eqref{eqn:upgradeproblem_cvx} as well as the problem with the AC~OPF
policy constraint (ii) required no upgrades. Re-dispatch from the AC~OPF was
sufficient to satisfy the tightened voltage constraints. Note that
the re-dispatch of the AC~OPF led to active power flowing out through the slack
bus in the system -- other generators were dispatched to generate more power in
this case.  For the Newton AC power flow policy (iii), upgrades were required:
After an hour of optimization, the lower bound was raised to 2 upgrades,
whereas the best solution found had 3 upgrades.

\subsection{Case study: 116-bus network}
This example studies part of the distribution grid in Z\"urich, Switzerland.
This grid has 116 buses and is almost radial: One cycle of length 6 is present.
Actual load data was used, enhanced with some fictional generation to simulate
a large curtailable PV installation far away from the slack bus. The resulting
data leads to a steady state that violates both the voltage bounds and some
current bounds. Physically available line parameters were used for upgrade
possibilities, for a total of 525 possible upgrades (locations and strengths
were picked in a case study and discussion with the DSO).

Figure~\ref{fig:116bus} shows this network and locations of violations and
Table~\ref{tbl:comparison_116} shows the upgrades obtained.
In this application, the solution of the problem without any operating policy is
to not perform any upgrades. This is due to the SDP
relaxation~\eqref{eqn:upgradeproblem_cvx} being feasible for the operational
constraints, despite OPF solvers not finding any feasible solutions. In this
case, a system designer would not get much information from this result, since
not upgrading the system is not an option. When the AC~OPF policy constraint
(i.e., policy (ii)) is added, a solution with one upgrade is found. This
solution has the added benefit of a certificate that if the AC~OPF operating
policy is used in practice, the upgrades as found by the procedure are certified
to work for the given load pattern. If policy (iii) is used, two additional
upgrades are required for the operation since the policy does not optimize the
power dispatch. The optimization with policy (ii) found a first solution with 4
upgrades within seconds, then took 10 minutes to improve that solution to one
upgrade and close the gap. For policy (iii), a solution with 4 upgrades was also
found quickly, but after an hour of runtime, the best solution found was 3
upgrades with a lower bound of 2.

\subsection{Discussion}
The addition of policy constraints opens many possibilities for better modeling
of the real system and its operation. However, since there is no analytical
information about the policy function $g(a,s)$, each policy cut inherently only
removes one upgrade configuration from consideration. In other words, if a lot
of policy cuts are required, the performance of the method approaches that of a
brute force evaluation of all upgrade possibilities.

In the experiments, it was found that the AC OPF operating policy is rather
good at finding feasible points if they exist --- meaning the set $\mathcal
F_{\eqref{eqn:upgradeproblem_cvx}} \setminus \mathcal
F_{\eqref{eqn:upgradeproblem2}}$ was found to be small.  However, the Newton
power flow policy fared worse in this respect: Many policy cuts were required.
It would therefore be advisable to include analytical information such as
convex relaxations of $g(a,s)$ into Problem~\eqref{eqn:upgradeproblem_cvx}, if
available.

The policy constraints can also be used to investigate trade-offs between
upgrades to the system and policy changes. For example, it was found that if
the PV installation in the 116-bus example is allowed to be further curtailed,
fewer upgrades will be required. The base case in the experiment was a 20\
curtailment. If no curtailment is allowed, the same upgrades as for the Newton 
power flow are required since the only dispatchable power is the slack bus. If
curtailment is raised to 25\
scenario.

\section{Conclusion}
The method presented in this work outlines a systematic way of using convex
relaxations and heuristic methods, as well as operating policy constraints to
find globally optimal solutions to practical power system line upgrade problems.
The addition of the operating policy constraints both made the problem more
realistic as well as circumventing the hardness of checking AC power flow
feasibility. An industrial example with real world data was presented,
illustrating the features of the problem and demonstrating the effectiveness of
the approach.

\section*{Acknowledgments}
\label{sec:acknowledgements}
This work was supported by the Swiss Commission for Technology and Innovation
(CTI), (Grant 16946.1 PFIW-IW). We thank the team at Adaptricity (Stephan Koch,
Andreas Ulbig, Francesco Ferrucci) for providing the system data and valuable
discussions on power systems.

\bibliographystyle{ieeetr}
\bibliography{biblio}

\end{document}